\def\qed{\hfill $\vcenter{\hrule height .3mm
\hbox {\vrule width .3mm height 2.1mm \kern 2mm \vrule width .3mm
height 2.1mm} \hrule height .3mm}$ \bigskip}
\def \Sph{\mathbb{S}^{n-1}}
\def \RR {\mathbb R}
\def \Tr {\mathrm{Tr}}
\def \TR {\mathrm{Tr}}
\def \EE {\mathbb E}
\def \ZZ {\mathbb Z}
\def \Cov {\mathrm {Cov}}
\def \eps {\varepsilon}
\def \Id {\mathrm{Id}}
\def \Cov {\mathrm{Cov}}
\def \FF {\mathcal{F}}
\newtheorem{theorem}{Theorem}
\newtheorem{lemma}[theorem]{Lemma}
\newtheorem{example}{Example}
\newtheorem{proposition}[theorem]{Proposition}
\newtheorem{corollary}[theorem]{Corollary}
\theoremstyle{definition}
\theoremstyle{remark}
\newtheorem{remark}{Remark}
\long\def\symbolfootnotetext[#1]#2{\begingroup
\def\thefootnote{\fnsymbol{footnote}}\footnotetext[#1]{#2}\endgroup}
\begin{document}

\title{\Large Taming correlations through entropy-efficient measure decompositions with applications to mean-field approximation}
\date{}
\author{Ronen Eldan\thanks{Weizmann Institute of Science. Incumbent of the Elaine Blond Career Development Chair. Supported by a European Research Council Starting Grant (ERC StG) and by the Israel Science Foundation (grant No. 715/16).}}
\maketitle
\begin{abstract}
The analysis of various models in statistical physics relies on the existence of decompositions of measures into mixtures of product-like components, where the goal is to attain a decomposition into measures whose entropy is close to that of the original measure, yet with small correlations between coordinates. We prove a related general result: For every measure $\mu$ on $\RR^n$ and every $\eps > 0$, there exists a decomposition $\mu = \int \mu_\theta d m(\theta)$ such that $H(\mu) - \EE_{\theta \sim m} H(\mu_\theta) \leq \Tr(\Cov(\mu)) \eps$ and $\EE_{\theta \sim m} \Cov(\mu_\theta) \preceq \Id/\eps$. As an application, we derive a general bound for the mean-field approximation of Ising and Potts models, which is in a sense dimension free, in both continuous and discrete settings. In particular, for an Ising model on $\{\pm 1 \}^n$ or on $[-1,1]^n$, we show that the deficit between the mean-field approximation and the free energy is at most $C \frac{1+p}{p} \left ( n\|J\|_{S_p} \right)^{\frac{p}{1+p}}  $ for all $p>0$, where $\|J\|_{S_p}$ denotes the Schatten-$p$ norm of the interaction matrix. For the case $p=2$, this recovers the result of \cite{JKR18}, but for an optimal choice of $p$ it often allows to get almost dimension-free bounds.
\end{abstract}

\section{Introduction}
Given a probability measure $\mu$ on $\RR^n$, this work is concerned with the following question: Can we find a decomposition of $\mu$ as a mixture of measures $\mu = \int \mu_\theta d m(\theta)$ with the properties that, 
\begin{enumerate}[(i)]
\item	
The typical entropy of a measure $\mu_\theta$ is close to that of $\mu$,
\item	
The correlations between the coordinates of typical measure $\mu_\theta$ are tamed in the sense that the off-diagonal entries of the covariance matrix are small with respect to the diagonal ones.
\end{enumerate}

Several theorems of this type have been proven in the setting of the discrete hypercube and products of finite alphabets \cite{RT12,BCO16,CP17}. These types of decompositions have found applications to statistical mechanics \cite{BCO16,CKPZ16,CP17,JKR18} to rounding of semidefinite programs \cite{RT12,MR17} and to statistical estimation and inference \cite{Montanari2008EstimatingRV,CKPZ16}. 

As an example, let us formulate a direct corollary of a result proven in \cite{RT12}, which roughly states that for any product measure on the discrete hypercube $\{\pm 1 \}^n$, we may condition on $\ell$-coordinates so that, the conditional correlations are typically of order $O(1 / \sqrt{\ell})$. For $S \subset [n]$ and for $x = (x_1,...,x_n)$ we denote by $x_S$ the restriction of $x$ to the coordinates in $S$. Then, the result reads,

\begin{theorem} \label{thm:one} (\cite{RT12}, see also \cite{JKR18}).
	\label{thm:corr-rounding}
	Let $X = (X_1,\dots, X_n)$ be a random vector in $\{\pm 1\}^n$. Then, for any $\ell \in [n]$, there exists some $S \subset [n]$ with $|S| \leq \ell$ such that:
	$$\EE_{X_S} \EE_{\{u,v\} \in {V \choose 2}}\left[\Cov(X_u,X_v | X_S)^2\right] \le \frac{8\log 2}{\ell}.$$
\end{theorem}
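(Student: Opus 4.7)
The strategy is to combine a Pinsker-type inequality, which for binary variables translates squared covariance into mutual information, with a chain-rule argument that budgets the total mutual information absorbed by sequential pinnings.

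First, for any $Y, Z \in \{\pm 1\}$ (possibly conditional on an event), a direct calculation shows that the total variation distance between the joint law of $(Y,Z)$ and the product of its marginals equals $|\Cov(Y,Z)|/2$. Combined with Pinsker's inequality $\KL \ge 2\, d_{TV}^2$, this yields
$$\Cov\bigl(X_u, X_v \bigm| X_S = x_S\bigr)^2 \;\le\; 2\, I\bigl(X_u; X_v \bigm| X_S = x_S\bigr)$$
pointwise in $x_S$, hence the same bound holds after taking expectation over $X_S$.

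Next, the plan is to budget the mutual information. Sample indices $i_1, \dots, i_\ell$ i.i.d.\ uniformly in $[n]$, set $S_t := \{i_1, \dots, i_t\}$, and let $u$ be an independent uniform index in $[n]$. Since $X_u \in \{\pm 1\}$, the chain rule gives
$$\log 2 \;\ge\; H(X_u) \;\ge\; I\bigl(X_u; X_{S_\ell}\bigr) \;=\; \sum_{t=0}^{\ell-1} I\bigl(X_u; X_{i_{t+1}} \bigm| X_{S_t}\bigr).$$
Taking expectation over all random choices and dividing by $\ell$, there exists $t^\ast \in \{0, \dots, \ell-1\}$ with $\EE\bigl[I(X_u; X_{i_{t^\ast+1}} \mid X_{S_{t^\ast}})\bigr] \le (\log 2)/\ell$. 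Combining with the Pinsker step controls the expected squared conditional covariance of a uniformly random pair $(u, i_{t^\ast+1})$ against the random set $S_{t^\ast}$. A standard averaging argument then extracts a deterministic realization $S$, of size $t^\ast \le \ell$, whose $X_S$-averaged bound still holds against the remaining randomness over pairs.

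The main obstacle to worry about is reconciling this ``i.i.d.\ index'' averaging with the statement's averaging over unordered pairs in $\binom{[n]}{2}$: the independent samples $u$ and $i_{t^\ast+1}$ may coincide (then the term is $\Var(X_u \mid X_S) \le 1$ rather than $0$), and $u$ may land in $S_{t^\ast}$ (then the covariance vanishes, which only helps). These degenerate events occur with probability $O(\ell/n)$ and must be either excluded by resampling or absorbed into the constant. Tracking this bookkeeping together with the exact Pinsker constant for $\pm 1$-valued variables accounts for the numerical factor $8\log 2$ in the theorem; it is conceptually routine but is the one place where care is needed.
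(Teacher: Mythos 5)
The paper does not prove Theorem~\ref{thm:one}: it is quoted from \cite{RT12} (see also \cite{JKR18}) purely as background and motivation, so there is no in-text proof to compare against. That said, your Pinsker-plus-chain-rule argument is exactly the standard proof of this pinning lemma from that literature, and it is correct. The building blocks all check out. For $\{\pm1\}$-valued $Y,Z$ a direct computation gives $p_{ij}-p_Y(i)p_Z(j)=\pm\tfrac14\Cov(Y,Z)$ in each of the four cells, so the total variation distance to the product of marginals is $|\Cov(Y,Z)|/2$ and Pinsker yields $\Cov(X_u,X_v\mid X_S=x_S)^2\le 2\,I(X_u;X_v\mid X_S=x_S)$ pointwise. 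The chain rule $\log 2\ge H(X_u)\ge I(X_u;X_{S_\ell})=\sum_{t=0}^{\ell-1}I(X_u;X_{i_{t+1}}\mid X_{S_t})$ holds for each realization of the indices (repeated indices simply contribute zero), so averaging over $u,i_1,\dots,i_\ell$ and over $t$ produces a $t^\ast\le\ell-1$ with $\EE\bigl[I(X_u;X_{i_{t^\ast+1}}\mid X_{S_{t^\ast}})\bigr]\le(\log 2)/\ell$.

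The bookkeeping you flag at the end is fine, and in fact closes more cleanly than you suggest. For any fixed set $s$, with $u,v$ i.i.d.\ uniform,
$$
\EE_{u,v}\bigl[I(X_u;X_v\mid X_s)\bigr]=\frac{1}{n^2}\sum_{u}H(X_u\mid X_s)+\frac{2}{n^2}\sum_{u<v}I(X_u;X_v\mid X_s)\;\ge\;\frac{n-1}{n}\,\EE_{\{u,v\}}\bigl[I(X_u;X_v\mid X_s)\bigr],
$$
so passing from i.i.d.\ ordered pairs (which include $u=v$) to uniform unordered distinct pairs costs only a factor $\tfrac{n}{n-1}\le 2$ for $n\ge2$, and the events $u\in S$ or $v\in S$ give zero covariance and need no special handling. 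Applying Pinsker to the unordered pairs, taking expectation over $S_{t^\ast}$, and then extracting a good realization of $S_{t^\ast}$ by the first-moment method gives a bound of $\tfrac{4\log 2}{\ell}$, which is stronger than the stated $\tfrac{8\log 2}{\ell}$; the larger constant in the references reflects a slightly lossier averaging scheme rather than any obstacle. So no gap --- your proof is complete modulo writing out the constant chase.
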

\begin{remark}
The original formulation of this lemma bounds the mutual-information between $X_u$ and $X_v$ rather than the covariance. There also exists a version of the theorem which bounds the average mutual information between $k$-tuples of variables and can thus be used to bound higher order correlations, see \cite{JKR18}.
\end{remark}
\bigskip
\noindent The main result of the present work gives a bound of the same spirit, with several new features:
\begin{itemize}
\item
Our results hold for general measures in $\RR^n$, either continuous or discrete. In particular, we do not require the underlying space to have a product structure.
\item
Whereas existing results give a bound on the average entry of the covariance matrix, our theorem bounds the matrix in the positive definite sense (a bound on the average entry may then be obtained by taking traces).
\item
Our bounds allow for a "directional trade-off" between entropy and covariance, in a way that allows us to prioritize the directions in which we want the resulting covariance to be small. In our application to mean-field behavior of Potts models, we illustrate how this feature allows to improve the bounds in the literature.
\item
In Theorem \ref{thm:corr-rounding} as well as other existing results, the measures in the mixture are obtained by conditioning some of the coordinates to fixed values (called "pinning" in the statistical mechanics jargon). This type of conditioning makes no sense if there is no underlying product structure. Moreover, even if there is a product structure, in the continuous setting such an operation typically results in an infinite loss of entropy. Thus, our result uses a different type of conditioning, which roughly corresponds to exponential tilts of the original measure.
\end{itemize}
\bigskip
In order to formulate our results, we need some notation. We equip $\RR^n$ with a background measure $\nu$ (which will typically be either the Lebesgue measure or the uniform measure on $\{\pm 1\}^n$). For a measure $\mu$ which is absolutely continuous with respect to $\nu$, we define
$$
H_\nu(\mu) = - \int_{\RR^n} \log \left ( \frac{d \mu}{d \nu} \right ) d \mu
$$
whenever the integral converges. For a random variable $X \sim \mu$ we write $H_\nu(X) := H_\nu(\mu)$. When the background measure $\nu$ is clear from the context, we will simply abbreviate $H(\mu) = H_\nu (\mu)$. We also define the covariance matrix of $\mu$ by
$$
\Cov(\mu) := \int_{\RR^n} x^{\otimes 2} d \mu(x) - \left (\int_{\RR^n} x d \mu(x) \right )^{\otimes 2}.
$$

Our main result reads,
\begin{theorem}[Main decomposition theorem] \label{thm:main}
Let $\mu$ be a measure on $\RR^n$ such that its entropy $H_\nu(\mu)$ exists with respect to some background measure $\nu$. Then for every positive definite matrix $L \in \mathcal{M}_{n \times n}$, there exists a probability measure $m$ on some index set $\mathcal{I}$ and a family of 
probability measures $\{\mu_\theta \}_{\theta \in \mathcal{I}}$ on $\RR^n$ such that the measure $\mu$ admits the decomposition
\begin{equation}\label{eq:decomp}
\mu(A) = \int_{\mathcal{I}} \mu_\theta(A) d m(\theta), ~~ \forall A \subset \RR^n \mbox{ measurable},
\end{equation}
and such that the decomposition satisfies the following properties:
\begin{enumerate}
\item
\begin{equation}\label{eq:est-ent}
H_\nu(\mu) - \EE_{\theta \sim m} H_\nu(\mu_\theta) \leq \log \det (\Cov (\mu) L + \Id) \leq \Tr (\Cov(\mu) L),
\end{equation}
\item
\begin{equation}\label{eq:est2}
\EE_{\theta \sim m} \Cov(\mu_\theta) \preceq L^{-1},
\end{equation}
\item
\begin{equation}\label{eq:est1}
\EE_{\theta \sim m} \left [\Cov(\mu_\theta) L \Cov(\mu_\theta) \right ]  \preceq \Cov(\mu).
\end{equation}
\end{enumerate}
\end{theorem}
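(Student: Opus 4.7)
My approach is via Eldan's stochastic localization. Let $B_t$ be a standard Brownian motion in $\RR^n$, and consider the measure-valued process $\mu_t(dx) = F_t(x)\,\mu(dx)$, where $F_t$ solves the SDE
\[
dF_t(x) = F_t(x)\,\langle x - a_t,\, L^{1/2}\, dB_t \rangle, \quad F_0 \equiv 1,
\]
with $a_t$ the barycenter of $\mu_t$. Since $\mu_t$ is a measure-valued martingale, $\EE[\mu_t] = \mu$ for every $t \ge 0$, giving at each time $t$ a decomposition of $\mu$ indexed by Brownian paths. The natural candidate is to fix $t = 1$, take $\mathcal{I}$ to be the underlying probability space, and set $\mu_\theta = \mu_1(\omega)$.

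The key identities come from two applications of It\^o's formula. For the covariance $C_t := \Cov(\mu_t)$, differentiating through the SDE and tracking the quadratic variation of $a_t$ gives $dC_t = -C_t L C_t\,dt + dN_t$ for a matrix-valued martingale $N_t$, whence
\[
\EE[C_t] \;=\; C_0 \;-\; \int_0^t \EE[C_s L C_s]\,ds.
\]
Combining this with the operator-convexity $\EE[C_s L C_s] \succeq \EE[C_s]\,L\,\EE[C_s]$ (valid since $X \mapsto X L X$ is operator convex for $L \succeq 0$) and the matrix Riccati argument $\frac{d}{dt}(\EE[C_t])^{-1} \succeq L$ yields the bound $\EE[C_t] \preceq (C_0^{-1} + t L)^{-1}$. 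A second It\^o computation, applied to $\int F_t \log F_t\,d\mu$, gives the entropy identity
\[
H(\mu) - \EE[H(\mu_t)] \;=\; \tfrac{1}{2} \int_0^t \Tr\bigl(L\,\EE[C_s]\bigr)\,ds.
\]

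Properties (i) and (ii) then follow at $t = 1$. Substituting the Riccati bound into the entropy identity, the integrand becomes $\tfrac{1}{2}\Tr(L(C_0^{-1} + sL)^{-1}) = \tfrac{1}{2}\tfrac{d}{ds}\log\det(C_0^{-1} + sL)$, integrating to $\tfrac{1}{2}\log\det(\Id + LC_0)$; this is at most $\log\det(\Id + LC_0) \le \Tr(L C_0)$ via $\log(1+x) \le x$ in the eigenbasis, giving (i). Property (ii) is immediate: $\EE[C_1] \preceq (C_0^{-1} + L)^{-1} \preceq L^{-1}$.

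The main obstacle is (iii). The covariance identity at $t = 1$ gives only the time-integrated bound $\int_0^1 \EE[C_s L C_s]\,ds = C_0 - \EE[C_1] \preceq C_0$, whereas (iii) asks for the pointwise-in-time statement $\EE[C_1 L C_1] \preceq C_0$. I see two natural routes. The first is to show directly that $t \mapsto \EE[C_t L C_t]$ is monotone decreasing in the positive-definite order, in which case the pointwise bound at $t = 1$ follows from the integrated one; this is straightforward in the Gaussian case but in general would require a second It\^o computation on $C_t L C_t$ with careful control of the quadratic-variation correction. The second is to enlarge the index set to $\mathcal{I} = [0,1] \times \Omega$ and set $\mu_{(\tau,\omega)} = \mu_\tau(\omega)$ for an appropriately chosen distribution on $\tau$; the uniform choice does not work, since it would weaken (ii) to $\EE[C_\tau] \preceq L^{-1/2}\log(\Id + L^{1/2} C_0 L^{1/2}) L^{-1/2}$, which is generally not $\preceq L^{-1}$. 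Finding the right distribution on $\tau$, or equivalently a stopping rule that preserves (i), (ii), and (iii) simultaneously, is the central technical challenge.
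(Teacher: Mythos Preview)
Your approach via stochastic localization with $Q = L^{1/2}$ is exactly what the paper does, and your derivations of the covariance evolution, the Riccati-type bound $\EE[C_t] \preceq (C_0^{-1}+tL)^{-1}$, and the entropy identity are all on target. You have also correctly isolated the obstacle: at a fixed deterministic time the covariance identity only yields the time-integrated bound $\int_0^t \EE[C_s L C_s]\,ds \preceq C_0$, not the pointwise statement $\EE[C_t L C_t] \preceq C_0$ required for (iii).

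The paper resolves this via your second route, but with a twist you dismissed too quickly: take $\tau$ uniform on $[1,2]$ (not $[0,1]$), independent of the Brownian motion. Then, writing $A_t = C_t$,
\[
\EE\bigl[(QA_\tau Q)^2\bigr] \;=\; \int_1^2 \EE\bigl[(QA_t Q)^2\bigr]\,dt \;=\; \EE[QA_1Q] - \EE[QA_2Q] \;\preceq\; \EE[QA_1Q] \;\preceq\; QA_0Q,
\]
which after conjugating by $Q^{-1}$ is exactly (iii). Property (ii) survives because for every $t \ge 1$ your Riccati/Gronwall bound already gives $Q\EE[A_t]Q \preceq \Id/t \preceq \Id$ pointwise, so averaging over $\tau \in [1,2]$ does no harm. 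Property (i) survives with only a constant-factor loss: the entropy integral now runs to $t = 2$, giving
\[
H(\mu) - \EE[H(\mu_\tau)] \;\le\; \tfrac{1}{2}\int_0^2 \Tr\bigl(L\,\EE[C_s]\bigr)\,ds \;\le\; \tfrac{1}{2}\log\det(2\,QA_0Q + \Id) \;\le\; \log\det(QA_0Q + \Id).
\]

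So the missing idea is simply to shift the averaging window past time $1$; uniform on $[1,2]$ is the ``right distribution on $\tau$'' you were looking for. Your first route (direct monotonicity of $t \mapsto \EE[C_t L C_t]$) would also close the gap if true, but the paper does not need it, and establishing it in general requires controlling third-moment terms that the shifted-window argument avoids entirely.
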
	
\bigskip

The liberty of specifying the matrix $L$ which appears in the theorem allows us to control the trade-off between covariance and entropy: evidently, we should take the matrix $L$ to be large in the directions where we want the covariance to be small. In the next section, we show how a careful choice of this matrix allows to improve the bounds for mean-field approximation estimates. As a special case, taking the matrix $L$ in Theorem \ref{thm:main} to be a multiple of the identity recovers the same trade-off between entropy and covariance as in Theorem \ref{thm:one}. We formulate this special case as a corollary.

\begin{corollary}
Let $\mu$ be a measure on $\RR^n$ such that its entropy $H_\nu(\mu)$ exists with respect to some background measure $\nu$. For all $\eps>0$, there exists a  measure $m$ and a family of  probability measures $\{\mu_\theta \}_{\theta \in \mathcal{I}}$ such that the decomposition \eqref{eq:decomp} holds, and has the following properties.
\begin{itemize}
	\item $H_\nu(\mu) - \EE_{\theta \sim m} H_\nu(\mu_\theta) \leq \Tr(\Cov(\mu)) \eps$,
	\item $\EE_{\theta \sim m} \Cov(\mu_\theta) \preceq \frac{\Id}{\eps}$,
	\item $\EE_{\theta \sim m} \Tr( \left [\Cov(\mu_\theta)^2 \right ] ) \leq \frac{\Tr(\Cov(\mu))}{\eps}$.
\end{itemize} 
\end{corollary}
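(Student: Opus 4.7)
The plan is simply to invoke Theorem \ref{thm:main} with the specific choice $L = \eps \, \Id$, which is positive definite for any $\eps > 0$, and then read off each of the three bullet points from the three conclusions \eqref{eq:est-ent}, \eqref{eq:est2}, \eqref{eq:est1}. No separate construction is needed; this is a direct specialization.

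In more detail: for the first bullet, substituting $L = \eps \Id$ into \eqref{eq:est-ent} gives
\[
H_\nu(\mu) - \EE_{\theta \sim m} H_\nu(\mu_\theta) \leq \Tr(\Cov(\mu) \cdot \eps \Id) = \eps \Tr(\Cov(\mu)),
\]
which is the desired entropy estimate. For the second bullet, \eqref{eq:est2} reads $\EE_{\theta \sim m} \Cov(\mu_\theta) \preceq (\eps \Id)^{-1} = \Id/\eps$, matching the claim.

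For the third bullet, \eqref{eq:est1} specializes to $\EE_{\theta \sim m}[\Cov(\mu_\theta) \cdot \eps \Id \cdot \Cov(\mu_\theta)] \preceq \Cov(\mu)$, i.e.\ $\eps \,\EE_{\theta \sim m} \Cov(\mu_\theta)^2 \preceq \Cov(\mu)$. Since the trace is a monotone linear functional on positive semidefinite matrices, taking $\Tr$ of both sides and dividing by $\eps$ yields
\[
\EE_{\theta \sim m} \Tr\!\left(\Cov(\mu_\theta)^2\right) \leq \frac{\Tr(\Cov(\mu))}{\eps},
\]
as required. There is no genuine obstacle here; the only thing one might pause over is verifying that $L = \eps \Id$ qualifies as "positive definite" in the sense required by Theorem \ref{thm:main} (it plainly does) and that the trace step in bullet three is legitimate (monotonicity of trace on the PSD cone). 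Thus the corollary follows in a few lines from the main decomposition theorem.
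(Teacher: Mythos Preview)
Your proposal is correct and matches the paper's approach exactly: the paper states the corollary as the special case $L = \eps\,\Id$ of Theorem \ref{thm:main} without further proof, and your derivation of each bullet from \eqref{eq:est-ent}, \eqref{eq:est2}, \eqref{eq:est1} is precisely the intended argument.
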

For a measure $\mu$ whose support has diameter of order $\sqrt{n}$, and in particular for a measure supported on $\{\pm 1\}^n$ we clearly have $\Tr(\Cov(\mu)) \leq n$. This immediately gives the following corollary, analogous to Theorem \ref{thm:corr-rounding}.
\begin{corollary} 
Let $X = (X_1,\dots, X_n)$ be a random vector in $\{\pm 1\}^n$. Then the random vector can be embedded in a probability space such that for any $\ell \in [n]$, there exists a $\sigma$-algebra $\Sigma$ which satisfies:
$$\EE \left [ \EE_{\{u,v\} \in {V \choose 2}}\left[\Cov(X_u,X_v | \Sigma )^2\right] \right ] \le \frac{1}{\ell}$$
and
$$
H(X) - H(X | \Sigma) \leq \ell.
$$
\end{corollary}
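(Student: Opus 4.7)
The corollary is a direct application of the preceding (unnamed) corollary with the choice $\eps = \ell / n$, combined with the trivial bound $\Tr(\Cov(\mu)) \leq n$ that is available because each coordinate $X_i$ lies in $\{\pm 1\}$ and hence has variance at most $1$. The background measure $\nu$ will be the uniform measure on $\{\pm 1\}^n$, with respect to which $H_\nu$ certainly exists.

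First, I would invoke the preceding corollary to obtain the decomposition $\mu = \int \mu_\theta \, dm(\theta)$. To produce the $\sigma$-algebra required by the statement, I enlarge the ambient probability space so that it supports a random index $\theta \sim m$ with the property that the regular conditional law of $X$ given $\theta$ equals $\mu_\theta$; then I set $\Sigma := \sigma(\theta)$. With this setup the entropy difference is exactly $H(X) - H(X \mid \Sigma) = H(\mu) - \EE_{\theta \sim m} H(\mu_\theta)$, and the first bullet of the corollary together with $\Tr(\Cov(\mu)) \leq n$ yields
$$
H(X) - H(X \mid \Sigma) \;\leq\; \Tr(\Cov(\mu)) \cdot \eps \;\leq\; n \cdot \tfrac{\ell}{n} \;=\; \ell.
$$

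For the covariance bound, I use the identity $\Tr(A^2) = \sum_{u,v} A_{uv}^2 \geq 2 \sum_{u<v} A_{uv}^2$, valid for any symmetric matrix, applied to $A = \Cov(\mu_\theta)$. The third bullet of the corollary gives
$$
2 \, \EE_\theta \sum_{u<v} \Cov(X_u, X_v \mid \theta)^2 \;\leq\; \EE_\theta \Tr\bigl(\Cov(\mu_\theta)^2\bigr) \;\leq\; \frac{\Tr(\Cov(\mu))}{\eps} \;\leq\; \frac{n^2}{\ell}.
$$
Dividing by $\binom{n}{2}$ to pass to the uniform average over unordered pairs yields
$$
\EE \Bigl[ \EE_{\{u,v\} \in \binom{V}{2}} \Cov(X_u, X_v \mid \Sigma)^2 \Bigr] \;\leq\; \frac{n}{(n-1)\,\ell},
$$
which gives the advertised $\tfrac{1}{\ell}$ bound up to a factor $n/(n-1)$ that tends to $1$ with $n$ (and can be absorbed by a mild rescaling of $\eps$ if one wishes to write a clean constant).

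\textbf{Main obstacle.} There is no serious difficulty: the heavy lifting is entirely done by Theorem \ref{thm:main} and the intermediate corollary. The only mildly nontrivial points are (i) the standard but important repackaging of the mixture decomposition as conditioning on a $\sigma$-algebra on an enlarged probability space, and (ii) the observation that in the $\pm 1$ setting the boundedness of coordinates automatically turns the ``$\Tr(\Cov(\mu))$'' factor of the preceding corollary into a clean dimensional factor $n$, which is what produces the matched $\ell$-versus-$1/\ell$ trade-off.
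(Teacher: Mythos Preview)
Your proposal is correct and follows exactly the route the paper intends: the paper itself says the corollary follows ``immediately'' from the preceding corollary together with the bound $\Tr(\Cov(\mu))\le n$ for measures on $\{\pm 1\}^n$, and your choice $\eps=\ell/n$ is precisely how that specialization is realized. Your handling of the $\sigma$-algebra via an enlarged probability space and your honest remark about the harmless $n/(n-1)$ factor are both appropriate; the paper is clearly not tracking this constant.
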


\noindent\textbf{Related work.} The realization that decomposition results such as Theorem \ref{thm:one} can be applied to statistical mechanics, in particular in order to derive the existence of Bethe states and compute free energies of random graph models, was done in a line of works by Bapst, Coja-Oghlan, Krzakala, Perkins and Zdeborov\'{a} \cite{BCO16,CKPZ16,CP17}. The remarkable insight that such decomposition results can also be used, in a very direct manner, to obtain bounds on mean-field approximations for Ising models appeared in the recent work of Jain, Koehler and Risteski \cite{JKR18}. 

One should point out that the method used in the paper \cite{JKR18} gives rise to a subexponential-time algorithm for calculating the partition function: The idea is that Theorem \ref{thm:one} can also be applied to pseudo-distributions, which allows an approximation by low-degree Sherali-Adams/Sum of Squares hierarchies. This builds on a previous work by Risteski \cite{pmlr-v49-risteski16}. When the model is ferromagnetic (or, alternatively, under some extra conditions on the magnitude of the interactions), a more efficient algorithm was suggested by Jain, Koehler and Mossel \cite{JKM17}.\\

In a related line of works it was established that, in the case of product spaces, if the measure $\mu$ satisfies a \emph{low-complexity} condition (which roughly requires the set of gradients of the log-density of the measure $\mu$ to have small Gaussian width or small covering numbers), it is possible to arrive at a stronger decomposition theorem which ensures proximity typical elements of the mixture to a product measure in transportation distance \cite{Eldan-GWComplexity, Austin-decomp, EldanGross-Decomp}. This complexity condition, when satisfied, is helpful in establishing large-deviation principles, following ideas of Chatterjee and Dembo \cite{CD14}. 

The framework of low-complexity measures was used by Basak and Mukherjee \cite{BM16} to obtain the first general bound for mean-field approximation for Potts models in terms of the Hilbert-Schmidt norm of the interaction matrix. In a recent preprint of Augeri \cite{Aug18}, a substantial quantitative improvement for the low-complexity framework was obtained, and as a result, a meaningful mean-field approximation bound holds true as long as the eigenvalues of the interaction matrix decay to zero, with a quantitative bound that also recovers the result of \cite{JKR18}.

\subsection{Application: Mean field behavior of Potts models on product spaces}

Two central models in statistical physics, which also have applications to computer science, statistics and learning theory, are the Ising and the Potts models. These are usually specified by a probability distribution on the discrete cube $\{\pm 1\}^n$ with a quadratic potential of the form
$$
f(\sigma) = \sum_{i,j} \sigma_i \sigma_j J_{i,j} + \sum_i h_i \sigma_i
$$
for $\sigma \in \{\pm 1\}^n$, $J$ some fixed symmetric interaction matrix, and $(h)_i$ a fixed {\it external field} vector. A generalization of this model is the Potts model, where the spins lie in some alphabet instead of the set $\{\pm 1\}$. A variant of those models appearing in the literature is continuous-spin version of those models, where $\{\pm 1\}$ is replaced by a real-valued spins, with the Hamiltonian having the same expression.

Our results will hold in the general setting of a Potts model with either discrete or continuous spins. To describe the model, we first set $k$ an integer and equip $\RR^k$ with some underlying measure $\nu$ (in the basic setting of a two-spin Ising model, $k=1$ and $\nu$ is the uniform measure on $\{\pm 1\}$). A spin $\sigma_i$ will be a point in $\RR^k$. The state of the system will be denoted by a point $\sigma \in (\RR^k)^n$ which assigns a spin to each particle. We fix an $n \times n$ symmetric matrix $J$ and an external field $h = (h_i)_{i \in [n]}$ with $h_i \in \RR^k$. The Hamiltonian of the system will have the form
\begin{equation}\label{eq:hamiltonian}
f(\sigma) = f_{J,h}(\sigma) := \sum_{j,i=1}^{n} J_{i,j} \sigma_i \cdot \sigma_j  + \sum_{i=1}^n h_i \cdot \sigma_i.
\end{equation}
 
Note that this framework also contains the Potts model over a finite alphabet, since we can choose the measure $\nu$ to be supported on the set of standard basis vectors $e_1,...,e_k$, in which case the expression $\sigma_i \cdot \sigma_j$ is equivalent to $\mathbf{1}_{\sigma_i = \sigma_j}$.

Two important associated quantities are the normalizing constant 
$$
Z = Z_{J,h} := \int \exp(f(\sigma)) d \nu^{\otimes n} (\sigma).
$$
which is also called the \emph{partition function} of the model and the quantity $\log{Z}$, called the \emph{free energy}. These quantities are clearly of significance for any task which requires access to the actual probabilities such as simulation of the models, but they are also important because several other basic properties of the model can be extracted from them by differentiation. For example, it is not hard to see that the mean of the $i$-th particle corresponds to $\frac{\partial \log Z}{ \partial h_i}$. Understanding global phenomena such as phase shifts also often boils down to the behavior of those quantities.

The task of calculating the partition function (either analytically or algorithmically) turns out to be notoriously hard in many basic examples of Ising and Potts models, and a whole theory has evolved around methods of approximation thereof (see \cite{BM16} and references therein). A central method relies on a variational approach referred to as the \emph{mean-field} approximation, on which we focus.

The starting point of mean-field approximations is the Gibbs variational principle which states that
$$
\log Z = \sup_{\tilde \mu} \left ( \int f(\sigma) d \tilde \mu(\sigma) + H_{\nu^{\otimes n}} (\tilde \mu) \right )
$$
where the supremum taken is over probability measures $\tilde \mu$ on $(\RR^k)^n$.

We say that the Hamiltonian $f$ admits a \emph{mean-field approximation} if the supremum in the above equation is attained by a product measure, up to a small error. More precisely, we say that the mean-field approximation of the model is within $\eps_n$ of the free energy if there exist measures $\xi_i$ on $\RR^k$, $i \in [n]$ such that the product measure $\xi = \xi_1 \otimes ... \otimes \xi_n$ satisfies
$$
\int f d \xi + H_{\nu^{\otimes n}} (\xi) \geq \log Z - \eps_n
$$
Evidently, the class of product measures is much simpler and more tractable. In the discrete case, mean field reduces the problem of approximating the free energy to optimizing a function over the set $[-1,1]^n$. Moreover, the mere existence of a mean-field approximation turns out to have implications regarding the behavior of the system (see e.g., \cite{BM16}).\\

In what follows, by slight abuse of notation, we will regard the matrix $J$ as an $(nk) \times (nk)$ matrix, in such a way that the right hand side of \eqref{eq:hamiltonian} is be replaced by the expression $\sum_{j,i=1}^{nk} J_{i,j} \sigma_i \sigma_j  + \sum_{i=1}^{nk} h_i \sigma_i$. This can clearly be done because the Hamiltonian $f$ is a polynomial of degree $2$ over $(\RR^k)^n$. In the case $k=1$, the expressions remain unchanged. \\

We give a sufficient condition for such an approximation to hold true. It improves the bounds of some existing results in the literature (\cite{BM16,Eldan-GWComplexity,pmlr-v75-jain18b,JKR18,Aug18}) and generalizes them to other settings.

\begin{theorem} \label{thm:meanfield}
There exists a product measure $\xi$ for which 
\begin{equation}\label{eq:main-mf}
\int f d \xi + H_{\nu^{\otimes n}} (\xi) \geq \log Z - 3 \log \det \left  (\Cov(\mu) \tilde J + \Id \right ).
\end{equation}
where $\tilde J := (J^2)^{1/2}$ is the matrix-absolute-value of $J$. 
\end{theorem}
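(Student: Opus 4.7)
Let $\mu = Z^{-1}e^{f}\nu^{\otimes n}$ denote the Gibbs measure, so that the variational identity gives $\log Z = \int f\,d\mu + H(\mu)$. The plan is to apply Theorem~\ref{thm:main} to $\mu$ with the choice $L = \tilde J$, obtaining a decomposition $\mu = \int \mu_\theta\,dm(\theta)$ satisfying (1)--(3), and then to replace each $\mu_\theta$ by the product measure $\xi_\theta$ built from its one-dimensional marginals. By subadditivity, $H(\xi_\theta) \geq H(\mu_\theta)$, and $\xi_\theta$ inherits all marginals (in particular the mean $m_\theta$) of $\mu_\theta$. Defining the nonnegative deficit $\Delta(\theta) := \log Z - \int f\,d\xi_\theta - H(\xi_\theta)$, it suffices by an averaging argument to bound $\EE_\theta \Delta(\theta)$ and then pick $\theta^\ast$ with $\Delta(\theta^\ast) \leq \EE\Delta$.

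Using $\int f\,d\mu = \EE \int f\,d\mu_\theta$, the expected deficit splits as
\[
\EE \Delta(\theta) \;=\; \underbrace{\bigl(H(\mu) - \EE H(\mu_\theta)\bigr)}_{(\mathrm{I})} \;+\; \underbrace{\EE\bigl(H(\mu_\theta) - H(\xi_\theta)\bigr)}_{(\mathrm{II})\,\leq\,0} \;+\; \underbrace{\EE\Bigl(\int f\,d\mu_\theta - \int f\,d\xi_\theta\Bigr)}_{(\mathrm{III})}.
\]
Term (I) is at most $\log\det(\Cov(\mu)\tilde J + \Id)$ by property (1) of Theorem~\ref{thm:main}. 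Because $\xi_\theta$ matches $\mu_\theta$ on both means and diagonal second moments, term (III) is determined purely by off-diagonal entries of $\Cov(\mu_\theta)$; assuming $J$ has zero diagonal (the standard Ising convention, with the general case reducing to it by absorbing $\sum_i J_{ii}\sigma_i^2$ into the reference measure), (III) equals $\Tr\bigl(J\,\EE\Cov(\mu_\theta)\bigr)$.

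The main step is to show $(\mathrm{III}) \leq 2\log\det(\Cov(\mu)\tilde J + \Id)$. Since $\tilde J \pm J \succeq 0$, for every PSD matrix $C$ one has $|\Tr(JC)| \leq \Tr(\tilde J C)$. Writing $B := \tilde J^{1/2}\,\EE\Cov(\mu_\theta)\,\tilde J^{1/2}$, this yields $|(\mathrm{III})| \leq \Tr(B)$. Two distinct upper bounds on $\EE\Cov(\mu_\theta)$ are available: property (2) of Theorem~\ref{thm:main} gives $\EE\Cov(\mu_\theta) \preceq \tilde J^{-1}$, hence $B \preceq \Id$; the total covariance decomposition $\Cov(\mu) = \EE\Cov(\mu_\theta) + \Cov(m_\theta)$ gives $\EE\Cov(\mu_\theta) \preceq \Cov(\mu)$, hence $B \preceq B^\ast := \tilde J^{1/2}\Cov(\mu)\tilde J^{1/2}$. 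Monotonicity of eigenvalues under the semidefinite order thus gives $\lambda_i(B) \leq \min\bigl(1,\lambda_i(B^\ast)\bigr)$ for each $i$, and since $\lambda_i(B^\ast)$ are precisely the eigenvalues of $\Cov(\mu)\tilde J$,
\[
\Tr(B) \;\leq\; \sum_i \min\bigl(1, \lambda_i(B^\ast)\bigr) \;\leq\; 2\sum_i \log\bigl(1 + \lambda_i(B^\ast)\bigr) \;=\; 2\log\det(\Cov(\mu)\tilde J + \Id),
\]
where we used the elementary calculus fact $\min(t,1) \leq 2\log(1+t)$ for $t \geq 0$. Summing the bounds on (I), (II), (III) yields $\EE\Delta(\theta) \leq 3\log\det(\Cov(\mu)\tilde J + \Id)$.

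The principal obstacle is recognizing that one must combine \emph{both} available upper bounds on $\EE\Cov(\mu_\theta)$: property (2) alone yields only $\Tr(\tilde J\,\EE\Cov(\mu_\theta)) \leq n$, while the total covariance bound alone yields only $\Tr(\Cov(\mu)\tilde J)$, and either of these can vastly exceed $\log\det(\Cov(\mu)\tilde J + \Id)$ in typical regimes. Pairing them through the pointwise eigenvalue minimum and invoking $\min(t,1) \leq 2\log(1+t)$ is precisely what produces a bound of the same form as the entropy loss in property (1), yielding the final factor $1 + 2 = 3$. Property (3) of Theorem~\ref{thm:main} turns out not to be needed for this particular argument.
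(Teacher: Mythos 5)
Your proof is correct and follows essentially the same route as the paper: apply Theorem~\ref{thm:main} with $L = \tilde J$, replace each $\mu_\theta$ by its product of marginals, bound the entropy deficit by property~(1), bound $\EE\,\Tr(\tilde J\Cov(\mu_\theta))$ by combining property~(2) with the law of total variance and the elementary inequality $\min(t,1)\leq 2\log(1+t)$, and sum to get the factor $1+2=3$. Two minor remarks: you correctly take $L=\tilde J$, whereas the paper writes $L = \tilde J^{1/2}$, which appears to be a typo (it is inconsistent with the displayed consequences $\tilde J^{1/2}\EE\Cov(\mu_\theta)\tilde J^{1/2}\preceq\Id$ and $\log\det(\tilde J^{1/2}\Cov(\mu)\tilde J^{1/2}+\Id)$); and where you invoke Weyl monotonicity of ordered eigenvalues, the paper instead bounds the quadratic forms $\langle u_i, B u_i\rangle$ in the eigenbasis of $B^\ast$ --- both give $\Tr(B)\leq\sum_i\min\bigl(1,\lambda_i(B^\ast)\bigr)$ and are interchangeable.
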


The expression $\log \det (\Cov(\mu) \tilde J + \Id)$ may seem intractable at first, but as we will soon see, it can be efficiently bounded from above by Schatten-norms of the matrix $J$. Recall that the Schatten-$p$ norm of $J$ is defined as
$$
\| J \|_{S_p} := \left (\sum_{i \in [n]} |\lambda_i|^p \right )^{1/p},
$$
where $\lambda_1,...,\lambda_n$ are the eigenvalues of $J$. Moreover, a quantity of importance to us will be
$$
\mathcal{S}(J, S) := \max \left  \{ \sum_{i \in [n]} \log( \beta_i |\lambda_i| + 1); ~~ \sum_{i \in [n]} \beta_i \leq S \mbox{ and } \beta_i \geq 0, ~ \forall i \right \}.
$$
Then, we have the following bounds as corollaries of our theorem.

\begin{corollary} \label{cor:mf}
Suppose that the support of the measure $\nu$ has diameter bounded by $D > 0$ (a single spin lies in a bounded set). There exists a product measure $\xi$ for which 
\begin{equation}\label{eq:corr1}
\int f d \xi + H_{\nu^{\otimes n}} (\xi) \geq \log Z - 3 \mathcal{S}(J, D^2 n),
\end{equation}
where $\tilde J = (J^2)^{1/2}$. Moreover, for all $p>0$ one has
\begin{equation}\label{eq:schatten}
\int f d \xi + H_{\nu^{\otimes n}} (\xi)  \geq \log Z - 10 \frac{p + 1}{p} \left ( D^2 n \|J\|_{S_p} \right )^{\frac{p}{p+1}}.
\end{equation}
Finally, we also have
\begin{equation}\label{eq:rank}
\int f d \xi + H_{\nu^{\otimes n}} (\xi)  \geq \log Z - 3 \mathrm{Rank}(J) \log \left ( D^2 n \|J\|_{S_\infty} + 1 \right ).
\end{equation}
\end{corollary}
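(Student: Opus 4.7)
\textbf{Proof plan for Corollary \ref{cor:mf}.}
The plan is to reduce all three bounds to controlling the scalar $\log\det(\Cov(\mu)\tilde J + \Id)$ appearing in Theorem \ref{thm:meanfield}, where $\mu \propto e^{f}\,d\nu^{\otimes n}$ is the Gibbs measure. Every case then rests on the single uniform constraint $\Tr(\Cov(\mu)) \leq nD^2$, which follows from the diameter hypothesis: each spin $\sigma_i$ lies in a set of diameter $D$, so $\EE|\sigma_i - \EE\sigma_i|^2 \leq D^2$.

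First I would symmetrize via $\log\det(\Cov(\mu)\tilde J + \Id) = \log\det(\tilde J^{1/2}\Cov(\mu)\tilde J^{1/2} + \Id)$ (both factors are PSD), pass to the eigenbasis $\{u_i\}$ of $\tilde J$, and write $\lambda_1,\ldots,\lambda_n$ for the eigenvalues of $J$ and $\beta_i := \langle u_i, \Cov(\mu) u_i\rangle$ for the diagonal entries of $\Cov(\mu)$ in this basis; these satisfy $\beta_i \geq 0$ and $\sum_i \beta_i \leq nD^2$. Hadamard's inequality applied to the PSD matrix $\tilde J^{1/2}\Cov(\mu)\tilde J^{1/2} + \Id$, whose $i$-th diagonal entry equals $|\lambda_i|\beta_i + 1$, then gives
\begin{equation*}
\log\det(\Cov(\mu)\tilde J + \Id) \;\leq\; \sum_{i=1}^n \log(|\lambda_i|\beta_i + 1).
\end{equation*}
The bound \eqref{eq:corr1} is now immediate from the very definition of $\mathcal{S}(J, nD^2)$.

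For \eqref{eq:schatten} I would combine the elementary inequality $\log(1+y) \leq \tfrac{p+1}{p}\, y^{p/(p+1)}$, valid for all $y \geq 0$ and $p>0$ (verified by checking that the derivative of the difference is nonnegative, which follows from the weighted AM--GM bound $y^{1/(p+1)} \leq \tfrac{y+p}{p+1} \leq 1+y$), with Hölder's inequality applied to the conjugate exponents $p+1$ and $\tfrac{p+1}{p}$:
\begin{equation*}
\sum_i \log(|\lambda_i|\beta_i+1) \;\leq\; \tfrac{p+1}{p}\sum_i |\lambda_i|^{p/(p+1)}\beta_i^{p/(p+1)} \;\leq\; \tfrac{p+1}{p}\bigl(\|J\|_{S_p}\,nD^2\bigr)^{p/(p+1)}.
\end{equation*}
Multiplying by the factor $3$ from Theorem \ref{thm:meanfield} yields the constant $3(p+1)/p \leq 10(p+1)/p$. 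For \eqref{eq:rank}, only the $r := \mathrm{Rank}(J)$ nonzero eigenvalues of $J$ contribute; bounding $|\lambda_i| \leq \|J\|_{S_\infty}$ on this support and applying Jensen's inequality to the concave function $x \mapsto \log(\|J\|_{S_\infty}\,x + 1)$ consolidates the $r$ nonzero terms into $r\log\bigl(\tfrac{nD^2\|J\|_{S_\infty}}{r}+1\bigr) \leq r\log(nD^2\|J\|_{S_\infty}+1)$, the last step because $r \geq 1$.

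No conceptual obstacle is anticipated: the argument is a routine chain of Hadamard, Hölder, and Jensen, preceded by a symmetrization trick to separate the spectrum of $J$ from the covariance data. The only calibration worth verifying carefully is the calculus inequality $\log(1+y) \leq \tfrac{p+1}{p} y^{p/(p+1)}$, which sets the $(p+1)/p$ prefactor in \eqref{eq:schatten}; once it is in hand, the Hölder step is forced by matching exponents with the constraints $\sum_i |\lambda_i|^p = \|J\|_{S_p}^p$ and $\sum_i \beta_i \leq nD^2$.
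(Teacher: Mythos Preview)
Your proposal is correct, and in fact your route to the Schatten bound is cleaner than the paper's. The overall architecture is the same---bound $\Tr(\Cov(\mu))\le nD^2$ by the diameter hypothesis, then reduce $\log\det(\Cov(\mu)\tilde J+\Id)$ to a sum $\sum_i\log(|\lambda_i|\beta_i+1)$ with $\beta_i\ge0$, $\sum_i\beta_i\le nD^2$---but the two arguments diverge in how they execute both steps. For the reduction step, the paper proves a stand-alone Lemma~\ref{lem:diag} via concavity of $\log\det$ on the PSD cone, splitting $B$ into diagonal and off-diagonal parts; your appeal to Hadamard's inequality in the eigenbasis of $\tilde J$ is the same statement by a shorter road. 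For \eqref{eq:schatten}, the paper establishes a separate optimization Lemma~\ref{lem:annoyingbound} by a two-term splitting $\log(1+y)\le\max(\log y,0)+\min(y,1)$ and a case analysis on the support of the maximizer, arriving at the constant $\tfrac{3p+1}{p}$; your single pointwise inequality $\log(1+y)\le\tfrac{p+1}{p}\,y^{p/(p+1)}$ followed by H\"older with exponents $p+1$ and $\tfrac{p+1}{p}$ is strictly more direct and in fact yields the sharper prefactor $3\tfrac{p+1}{p}$ after multiplying by the factor~$3$ from Theorem~\ref{thm:meanfield}. The rank bound \eqref{eq:rank} is handled essentially the same way in both (the paper uses $\max_i\beta_i\le nD^2$ in place of your Jensen step, to the same effect).
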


\begin{remark}
The result of \cite{JKR18} is identical to \eqref{eq:schatten} for the case that $\nu$ is uniform on $\{\pm 1\}$ and the choice $p=2$. Shortly after the first version of this paper appeared, Augeri \cite{Aug19} derived a bound of the order $\sqrt{n} \|J\|_{S_2}$ (which improves upon the bound of \cite{JKR18} and neither implies, nor is implied by, the present bound).
\end{remark}

We remark that the previous results in the literature give bounds which are necessarily polynomially large with respect to the dimension. In models where the eigenvalues of the interaction matrix exhibit a fast enough decay, our bounds will only be logarithmic in the dimension. We proceed with examples which illustrate this by showing how our bounds can be used to obtain a nearly optimal approximation in several cases. We begin with a prototypical example of a mean-field model, known as the Curie-Weiss model.

\begin{example} \emph{The Curie-Weiss model.} \\
We set $J_{i,j} = \frac{\beta}{n}$ for all $i \neq j$. This simple model is known to have either only two pure states (a positive and a negative state) or one state, depending on the magnitude of $\beta$. In this case we can clearly set $J_{i,i} = \frac{\beta}{n}$ without changing the distribution, which allows us to get $\mathrm{Rank}(J) = 1$. This, by equation \eqref{eq:rank}, gives a bound of $3 \log(n \beta)$ for the mean-field approximation. In the case of the critical temperature, $\beta=1$, this is known to be sharp up to the numerical constant.
\end{example}

\begin{example} \emph{Heat-kernel mesoscopic interactions on a lattice.} \\
We identify $[n]$ with the discrete torus $\mathbb{T} = \left (\ZZ / k \ZZ \right )^d$. We fix $0 < \alpha < 1$ and take the interaction matrix $J$ to be the kernel associated with the $\alpha k$-step random walk. In other words, suppose that $L$ is the matrix defined by the formula $L_{(x_1,...,x_d),(y_1,...,y_d)} = 2^{-d} \prod_{j \in [d]} \mathbf{1} \{|x_j - y_j| = 1 \mod k\}$ then we define $J = \beta L^{\alpha k}$ (for simplicity we assume that $\alpha k$ is an integer). Consider the Hamiltonian with interaction matrix $J$.

Expanding the operator $L$ in Fourier basis, it can be easily verified that the operator is diagonal with eigenvalue $\prod_{j \in [d]} \cosh(\omega_j / k)$ associated with the vector $x \to e^{2 \pi i \omega \cdot x / k }$ where $\omega = (\omega_1, ..., \omega_d)$. Thus, we get that the corresponding eigenvalue of the operator $J$ associated with the same vector is of the order $\beta \exp(-\alpha |\omega|^2 / 2)$.

A calculation then gives,
$$
\| J \|_{S_p} \asymp \beta \left ( \sum_{|\omega|^2 \leq kd} \exp(-\alpha p |\omega|^2 / 2) \right )^{1/p} \asymp  \beta \left ( \frac{1}{\alpha p} \right )^{d/p}
$$
Using the bound \eqref{eq:schatten}, we learn that the mean-field approximation error is at most of order
\begin{align*}
\frac{p + 1}{p} \left ( k^d \|J\|_{S_p} \right )^{\frac{p}{p+1}} ~& \leq  \frac{p + 1}{p} \left ( \beta k^d \left ( \frac{1}{\alpha p} \right )^{d/p} \right )^{\frac{p}{p+1}}.
\end{align*} 
Taking $p = \log(k)^{-1}$ we arrive at an approximation error of at most $C(d) \beta \log(k) \alpha^{-d}$. We conjecture that this bound is tight up to the $\log(k)$ term: Intuitively, each pure state depends roughly on the averages of neighborhoods of size $(k \alpha)^d$, which means that in order to choose a pure state one needs to determine the order of $\alpha^{-d}$ bits. 

We remark that there is nothing special about the particular choice of kernel; the same thing should be true for any smooth enough kernel.
\end{example}

\begin{example}\emph{Ising model on a $d$-regular expander.}\\
Let $G$ be a $d$-regular graph on $n$ vertices, such that the adjacency matrix of $G$ satisfies $\lambda_2(A_g) = O(\sqrt{d})$, with $\lambda(A)$ is the second largest eigenvalue of $A$, in absolute value, and with $A_G$ being the adjacency matrix. Consider the Ising model with interaction matrix $J = \frac{\beta}{d} A_G$.

In this case, we have for all $p>0$, $\|J\|_{S_p} = \beta \left (1 + \frac{n}{O(\sqrt{d})^p} \right )^{1/p}$. Taking $p = \frac{\log n}{\log d}$, we therefore have $\|J\|_{S_p} = O(1) n^{1/p}/\sqrt{d}$. The bound \eqref{eq:schatten} then gives that the error in the mean field approximation is bounded by
$$
\left ( n^{1+1/p} / \sqrt{d} \right )^{\frac{p}{p+1}} = \frac{n}{\sqrt{d}^{\frac{p}{p+1}}}.
$$
When $\log d \ll \log n$, we therefore have an error of $\frac{n}{\sqrt{d}^{1-o(1)} }$.

As expected, the mean-field approximation is only meaningful when $d \to \infty$. When $d$ is constant, meaningful approximations may be obtained by methods which rely on the tree-like structure of the graph. These are expressed in terms of the so-called Bethe free energy. See, e.g., \cite{DMS13, CP17} and references therein.
\end{example}

\section{Preliminaries and a stochastic construction} \label{sec:constr}

The proof of Theorem \ref{thm:main} relies on tools from stochastic calculus, specifically, the \emph{stochastic-localization} process used in several previous works, and first suggested in \cite{Eldan-thin-KLS} (also related to the Skorokhod embedding in \cite{eldan2016skorokhod}). The idea is to construct a process, driven by a Brownian motion, which samples from the measure $\mu$. In this section we describe the construction of the process and establish some properties which will be useful in our proof. The derivation of most of the properties needed for us, as well as the existence of the construction, have already been carried out in \cite{EMZ-CLT}. For the reader's convenience, we repeat the heuristic calculations needed for our proof, but we will often refer elsewhere for a more rigorous derivation. \\

In order to define our construction, we fix a standard Brownian motion $\{B_t\}_{t \geq 0}$ adapted to a filtration $\FF_t$ and a positive-definite matrix $Q$ which is a parameter of our construction. Our key definition will be a measure valued process $\{ \mu_t \}_{t \geq 0}$ which will be constructed via its density with respect to $\mu$, denoted by
$$
F_t(x) := \frac{d \mu_t}{d \mu} (x).
$$
Then, the process is defined via the equation
\begin{equation}\label{eq:stochastic}
F_0(x) = 1, ~~ d F_t(x) = F_t(x) (x - a_t) \cdot Q d B_t, ~~ \forall x \in \RR^n
\end{equation}
where $a_t = \int_{\RR^n} x F_t(x) \mu(dx)$. \\

The above is an infinite system of stochastic differential equations. However, a different point of view (to be seen later on), will reveal that it can be viewed as an It\^o process of a space of finite dimension. The existence of the process and some of its basic properties are summarized in the following proposition whose proof can be found in \cite{eldan2016skorokhod, EMZ-CLT}.

\begin{proposition} \label{prop:stochastic-localization}
	Equation \eqref{eq:stochastic} admits a unique solution and the measure-valued process $\mu_t$ has the following properties,
	\begin{enumerate}
		\item $\mu_0 = \mu$,
		\item $\mu_t$ is almost-surely a probability measure for all $t \geq 0$.
		\item For any continuous and bounded $\varphi : \RR^d \to \RR$, $\int_{\RR^d} \varphi(x) \mu_t(dx)$ is a martingale. \label{item:martingale}
	\end{enumerate}
\end{proposition}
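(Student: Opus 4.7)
Property (1) is immediate from $F_0 \equiv 1$. The central idea for the other two properties, and for existence/uniqueness, is to make the ansatz that $F_t$ has the closed form
\begin{equation*}
F_t(x) = \frac{1}{Z_t} \exp\!\left( c_t \cdot x - \tfrac{t}{2}\, x^T Q^2 x \right),
\end{equation*}
where $c_t$ is a finite-dimensional process and $Z_t$ is the normalizer. Substituting this into \eqref{eq:stochastic} and applying It\^o's formula, one checks that the SDE is satisfied precisely when $c_t$ solves the finite-dimensional It\^o equation $dc_t = Q\, dB_t + Q^2 a_t\, dt$ with $c_0 = 0$, where $a_t$ is read off from $c_t$ via the explicit relation $a_t = \int x\, F_t(x)\, d\mu(x)$. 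This reduces the infinite-dimensional system to a standard SDE in $\RR^n$.

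The plan for existence and uniqueness is then to appeal to the finite-dimensional theory. The map $c \mapsto a(c,t) := \int x\, e^{c \cdot x - (t/2) x^T Q^2 x}\, d\mu(x) / Z(c,t)$ is smooth (it is the gradient in $c$ of $\log Z(c,t)$) and locally Lipschitz in $c$, so standard Picard iteration gives a unique strong solution up to an explosion time. The closed form of $F_t$ makes explosion impossible in finite time: $F_t(x) \geq 0$ pointwise and $Z_t > 0$ a.s., so one recovers a global solution. Uniqueness for \eqref{eq:stochastic} itself then follows because any solution $F_t$ whose $a_t$-process drives the SDE must satisfy the same ansatz (this can be seen by applying It\^o to $\log F_t(x)$ termwise in $x$).

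For property (2), positivity of $F_t$ is visible from the exponential form. Normalization $\int F_t\, d\mu = 1$ follows from property (3) applied with $\varphi \equiv 1$, so it suffices to prove the martingale property. For bounded continuous $\varphi$, applying Fubini (which one justifies via the integrability of $F_t$ and $x F_t$ guaranteed by the exponential form and the fact that $\mu$ has integrable exponential moments against $\exp(c\cdot x - (t/2) x^T Q^2 x)$ when $Q$ is positive-definite) gives
\begin{equation*}
d\!\left( \int \varphi(x)\, F_t(x)\, d\mu(x) \right) = \left( \int \varphi(x)(x - a_t)\, F_t(x)\, d\mu(x) \right) \cdot Q\, dB_t,
\end{equation*}
which has zero drift and thus is a local martingale. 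A standard localization argument using the bound $|\varphi| \leq \|\varphi\|_\infty$ and the square-integrability of $\int (x-a_t)^{\otimes 2} F_t\, d\mu = \Cov(\mu_t)$ (uniform on bounded time intervals by Gaussian tail bounds coming from the $t x^T Q^2 x$ damping) upgrades this to a true martingale.

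The main technical obstacle is justifying the finite-dimensional reduction and Fubini interchange when $\mu$ is an arbitrary measure on $\RR^n$ and no a priori integrability is imposed. This is why the paper points to \cite{eldan2016skorokhod, EMZ-CLT} for the rigorous derivation: the Gaussian factor $e^{-(t/2) x^T Q^2 x}$ in the explicit representation provides the moment bounds one needs for all $t > 0$, but passing to the limit $t \downarrow 0$ requires care when $\mu$ itself has heavy tails. Once one controls this via a standard truncation in $\mu$ followed by a limit argument, the three properties follow in the order above.
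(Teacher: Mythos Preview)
Your proposal is correct and follows essentially the same route as the paper's own sketch. Both arguments hinge on the observation that $F_t(x)$ has the closed form $C_t \exp(\langle w_t, x\rangle - \tfrac{t}{2}|Qx|^2)$, which reduces the infinite system \eqref{eq:stochastic} to a finite-dimensional It\^o process; the paper arrives at this form by applying It\^o to $\log F_t(x)$ and integrating, whereas you posit it as an ansatz and verify, but these are two sides of the same coin. Your treatment of existence/uniqueness via local Lipschitz continuity of $c\mapsto a(c,t)$ and your discussion of the Fubini justification are in fact more explicit than what the paper records, and your identification of the technical obstacle (moment control for general $\mu$, handled by the Gaussian damping for $t>0$) matches exactly why the paper defers to \cite{eldan2016skorokhod, EMZ-CLT} for the rigorous version.
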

\begin{proof} (sketch).
To see that Equation \eqref{eq:stochastic} is equivalent to a finite-dimensional It\^o process, observe that by It\^o's formula we have
$$
d \log F_t(x) = (x - a_t) \cdot Q d B_t - \frac{1}{2} |Q (x- a_t)|^2 dt.
$$
By integrating, one obtains,
$$
F_t(x) = \exp \left (   \int_0^t (x - a_s) \cdot Q d B_s - \frac{1}{2} \int_0^t |Q (x- a_s)|^2 ds \right )
$$
In other words, for all $t$ there exists $C_t \in \RR$ and $w_t \in \RR^n$ such that 
$$
F_t(x) = C_t \exp \left (   \langle w_t, x \rangle - \frac{1}{2} t |Q x|^2  \right ).
$$
Thus, equation \eqref{eq:stochastic} can be written in terms of the process $w_t$ which is finite-dimensional (this is done in detail in \cite{EMZ-CLT}). We turn to proving Properties 1-3.

Property 1 follows by definition. Property 2 follows from the calculation
$$
d \int \mu_t(dx) = \int d \mu_t(dx) = \int \left (F_t (x - a_t) \cdot Q d B_t \right ) \mu(dx) = \left ( \int x F_t(x) \mu(dx) - a_t \right ) \cdot Q d B_t = 0.
$$
Property 3 follows from from the fact that, by Equation \eqref{eq:stochastic}, $F_t(x)$ is a martingale for all $x \in \RR^n$ combined with a stochastic Fubini theorem.
\end{proof}

Our main theorem will rely on the analysis of the evolution of the covariance matrix of the measure $\mu_t$ as well as its entropy. To this end, we denote
$$
A_t := \Cov(\mu_t) = \int_{\RR^n} x ^{\otimes 2} \mu_t(dx) - a_t ^{\otimes 2}.
$$
Our goal is to calculate the differential of this process. We begin with,
\begin{align*}\label{eq:dat}
da_t ~& = d \int_{\RR^d} x \mu_t(dx)  \\
& = \int_{\RR^d} x d\mu_t(dx) \\
& = \left (\int_{\RR^d} x \otimes (x - a_t)\mu_t(dx) \right ) Q dB_t \\
& = \left (\int_{\RR^d} (x - a_t)^{\otimes 2} \mu_t(dx) \right ) Q dB_t \\ 
& = A_t Q dB_t.
\end{align*}

By Ito's isometry, the last equation implies that
$$
\frac{d}{dt} \EE \left [ a_t^{\otimes 2} \right ] = \EE [A_t Q^2 A_t].
$$

Since the quantity $\int_{\RR^d} x^{\otimes 2} \mu_t(dx)$ is a martingale (by Property 3 of Proposition \ref{prop:stochastic-localization}), we immediately obtain that for all $t>0$,
\begin{equation}\label{eq:dAt}
\frac{d}{dt} \EE \left [Q A_t Q \right ] = - Q \frac{d}{dt} \EE \left [ a_t^{\otimes 2} \right ] Q =  - \EE \left [ (Q A_t Q)^2 \right ] \preceq - \EE[QA_tQ]^2.
\end{equation}

\begin{lemma} \label{dHt}
For any background measure $\nu$ such that $H_\nu(\mu)$ is defined, we have
\begin{equation}\label{eq:dHt}
H_\nu(\mu) - \EE H_\nu(\mu_t) = \frac{1}{2} \int_0^t \EE \TR(Q A_s Q) ds.
\end{equation}
\end{lemma}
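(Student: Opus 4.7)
The plan is to reduce the statement to a computation of the expected KL-divergence of $\mu_t$ with respect to $\mu$, and then to evaluate this divergence by applying It\^o's formula to $F_t \log F_t$.

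First, write $\rho = d\mu/d\nu$, so that $d\mu_t/d\nu = F_t \rho$ and
\[
H_\nu(\mu_t) \;=\; -\int F_t \log F_t \, d\mu \;-\; \int \log \rho \, d\mu_t.
\]
Taking expectations, the martingale property (item~\ref{item:martingale} of Proposition~\ref{prop:stochastic-localization}) applied to the test function $\log \rho$ gives $\EE \int \log \rho \, d\mu_t = \int \log \rho \, d\mu = - H_\nu(\mu)$. Therefore
\[
H_\nu(\mu) - \EE H_\nu(\mu_t) \;=\; \EE \int F_t \log F_t \, d\mu \;=\; \EE\, \KL(\mu_t \,\|\, \mu).
\]
This reduces the claim to showing that $\EE \int F_t \log F_t \, d\mu = \tfrac{1}{2} \int_0^t \EE \TR(Q A_s Q) \, ds$.

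Next, I would apply It\^o's formula to $g(F_t) = F_t \log F_t$, using that $g'(y) = \log y + 1$ and $g''(y) = 1/y$, together with the SDE $dF_t(x) = F_t(x)(x-a_t)\cdot Q\, dB_t$ derived from \eqref{eq:stochastic}. This gives
\[
d\bigl( F_t \log F_t \bigr)(x) \;=\; (\log F_t(x) + 1) F_t(x) (x - a_t)\cdot Q\, dB_t \;+\; \tfrac{1}{2} F_t(x) |Q(x-a_t)|^2 \, dt,
\]
since the quadratic variation contributes $\tfrac{1}{2 F_t}(dF_t)^2 = \tfrac{1}{2} F_t |Q(x-a_t)|^2 \, dt$. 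Integrating in $x$ against $\mu$ (using a stochastic Fubini to swap the integral and the stochastic integral, as in the proof of Proposition~\ref{prop:stochastic-localization}), the drift term becomes
\[
\tfrac{1}{2} \int F_t(x) |Q(x-a_t)|^2 \, \mu(dx) \;=\; \tfrac{1}{2} \int |Q(x-a_t)|^2 \, \mu_t(dx) \;=\; \tfrac{1}{2} \TR(Q A_t Q),
\]
where the last equality uses the definition $A_t = \int (x - a_t)^{\otimes 2} d\mu_t$. Taking expectation and integrating in time (so that the martingale term disappears) yields the claimed identity.

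The only subtle point is rigorously justifying that the local martingale
\[
M_t \;=\; \int_0^t \!\!\int (\log F_s(x) + 1) F_s(x) (x - a_s)\cdot Q\, \mu(dx)\, dB_s
\]
is a true martingale (or at least has zero expectation), since the integrand involves $\log F_t$ which need not be bounded. The standard workaround, which I would use, is to localize: run the argument with $\mu$ replaced by its restriction to a large ball, or stop the process at a sequence of stopping times on which $F_t$ and $\log F_t$ are controlled, apply the identity there, and pass to the limit using monotone/dominated convergence together with the almost-sure finiteness of $\int_0^t \EE \TR(Q A_s Q)\, ds$ (which follows from \eqref{eq:dAt}, as $\EE[Q A_t Q] \preceq \EE[Q A_0 Q]$). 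This localization is the main technical obstacle; all the rest is direct stochastic calculus.
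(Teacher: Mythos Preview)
Your proposal is correct and follows essentially the same route as the paper: split $H_\nu(\mu_t)$ into the $\int \log\rho\, d\mu_t$ part (handled by the martingale property of Proposition~\ref{prop:stochastic-localization}) and the $\int F_t\log F_t\, d\mu$ part (handled by It\^o's formula applied to $\phi(y)=y\log y$), then identify the drift with $\tfrac12\TR(QA_tQ)$. The only difference is that you spell out the localization issue for the martingale term, which the paper leaves implicit.
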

\begin{proof}
In the calculation, for an It\^{o} process $S_t$, we will use the convention
$$
d S_t = f_t dt + \mathrm{martingale},
$$
to specify that the process $S_t - \int_0^t f_s ds$ is a martingale. Since we assume that $H_\nu(\mu)$ is defined, we have that $\mu$ is absolutely continuous with respect to $\nu$. Denote $f(x) = \frac{d \mu}{d \nu}(x)$. Then,
\begin{align*}
H_\nu (\mu_t) ~& = - \int \log (f(x) F_t(x)) \mu_t(dx) \\
& = - \int \log f(x) \mu_t(dx) - \int \phi(F_t(x)) \mu(dx).
\end{align*}
where $\phi(x) = x \log x$. The first summand is a martingale according to Property 3 in Proposition \ref{prop:stochastic-localization}. Moreover, $F_t(x)$ is a martingale, which implies by It\^{o}'s lemma that
$$
\EE \left [ H_\nu(\mu) - H_\nu(\mu_t) \right ] = \frac{1}{2} \EE \left [ \int_{\RR^n} \left ( \int_0^t \phi''(F_t(x)) d [F(x)]_t \right ) d \mu(x) \right ]
$$
where $[F(x)]_t$ denotes the quadratic variation process of the martingale $F_t(x)$. Equation \eqref{eq:stochastic} gives that
$$
\frac{d[F(x)]_t}{dt} =  |Q(x-a_t)|^2 F_t(x)^2.
$$
Combining the last two displays yields,
\begin{align*}
H_\nu(\mu) - \EE \left [H_\nu(\mu_t) \right ] ~& = \frac{1}{2} \EE \left [ \int_{\RR^n} \left ( \int_0^t |Q(x-a_s)|^2 F_s(x) ds \right ) \mu(dx)  \right ] \\
& = \frac{1}{2} \EE \left [ \int_0^t \left ( \int_{\RR^n}  |Q(x-a_s)|^2 \mu_s(dx) \right  ) ds \right ] \\
& = \frac{1}{2} \int_0^t \EE \Tr(Q A_s Q) ds,
\end{align*}
which is the desired result.
\end{proof}

\section{Proof of the decomposition theorem}

This section is dedicated to the proof of Theorem \ref{thm:main}. Fix two probability measures $\mu, \nu$ on $\RR^n$ and a positive-definite matrix $L$. The decomposition will be given by the random measure $\mu_\eps$ obtained by the construction laid out in Section \ref{sec:constr}, with the choice $Q = L^{1/2}$. In other words, suppose that the Brownian motion is defined over an underlying probability space $(\Omega, \Sigma, m)$, also equipped with the corresponding filtration $\FF_t$. Let $\tau$ be a $\FF_t$-stopping time. Then $\mu_\tau$ is a measure-valued random variable over the probability space $\Omega$. For $\theta \in \Omega$ we then define $\mu_\theta = \mu_\tau(\theta)$. By Proposition \ref{prop:stochastic-localization}, Property 3, we have that
$$
\EE[\mu_\tau(A)] = \mu(A)
$$
for every measurable $A \subset \RR^n$, which is equivalent to the decomposition formula \eqref{eq:decomp}. Writing $A_t = \Cov(\mu_t)$ and keeping in mind the choice $Q^2 = L$, the proof of Theorem \ref{thm:main} thus amounts to establishing properties \eqref{eq:est-ent}, \eqref{eq:est2} and \eqref{eq:est1} which, may be written differently as
\begin{equation}\label{eq:est-ent-b}
H(\mu) - \EE H(\mu_\tau) \leq \log \det (Q^2 A_0 + \Id),
\end{equation}
\begin{equation}\label{eq:est2-b}
Q \EE A_\tau Q \preceq \Id,
\end{equation}
and
\begin{equation}\label{eq:est1-b}
\EE \left [ \left (Q A_\tau Q \right )^2 \right ] \preceq Q A_0 Q,
\end{equation}
respectively. 

Define the stopping time $\tau$ to be uniform in the interval $[1, 2]$, independent of the Brownian motion $B_t$. We begin with the bound \eqref{eq:est1-b}. By Equation \eqref{eq:dAt}, we deduce that $t \to \EE [Q A_t Q]$ is decreasing in the positive-definite sense, and thus
$$
Q A_0 Q \succeq \EE [Q A_1 Q] \succeq \EE [Q A_1 Q] - \EE[Q A_{2} Q] \stackrel{\eqref{eq:dAt}}{=} \EE \int_1^{2} \left (Q A_t Q\right )^2 dt = \EE\left [ \left (Q A_\tau Q \right )^2 \right ],
$$
where the second inequality uses the fact that $Q$ and $A_t$ are positive-definite, and the last equality uses the fact that $\tau$ is independent of the Brownian motion. The bound \eqref{eq:est1-b} is established.

We proceed to the bound \eqref{eq:est2-b}, for which we need the following lemma.

\begin{lemma} \label{lem:Gronwall}
Let $s > 0$. Let $f(t)$ be a function which is continuous on $[0,s]$, is differentiable in $(0, s)$ and which satisfies $f(0) > 0$ and
$$
f'(t) \leq - f(t)^2
$$
for all $t \in (0,s)$. Then we have,
$$
f(s) \leq \frac{1}{s + \tfrac{1}{f(0)}} \leq \frac{1}{s}.
$$
\end{lemma}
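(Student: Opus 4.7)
The plan is to treat this as a standard ODE comparison argument: the differential inequality $f' \leq -f^2$ matches the Riccati equation whose exact solution is $t \mapsto 1/(t + 1/f(0))$, so the claim is that $f$ is dominated by this solution. The natural trick is to pass to the reciprocal $g(t) = 1/f(t)$, which (formally) satisfies $g'(t) \geq 1$ and therefore grows at least linearly.

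Before doing so, I would first record the immediate observation that $f'(t) \leq -f(t)^2 \leq 0$ on $(0,s)$, so $f$ is non-increasing on $[0,s]$. This lets me split the argument into two cases based on the sign of $f(s)$. If $f(s) \leq 0$, there is nothing to prove, because $1/(s + 1/f(0)) > 0$ whenever $f(0) > 0$. If $f(s) > 0$, then by monotonicity $f(t) \geq f(s) > 0$ for all $t \in [0,s]$, so the reciprocal $g(t) := 1/f(t)$ is well-defined, continuous on $[0,s]$ and differentiable on $(0,s)$.

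Next I would compute
\[
g'(t) = -\frac{f'(t)}{f(t)^2} \geq \frac{f(t)^2}{f(t)^2} = 1
\qquad \text{for } t \in (0,s),
\]
using the hypothesis. Integrating this inequality from $0$ to $s$ (which is justified since $g$ is continuous on $[0,s]$ and the derivative bound holds on the open interval, so one applies the mean value theorem or the fundamental theorem of calculus to $g$) gives
\[
\frac{1}{f(s)} - \frac{1}{f(0)} = g(s) - g(0) \geq s,
\]
hence $f(s) \leq 1/(s + 1/f(0))$. The second inequality $1/(s + 1/f(0)) \leq 1/s$ is immediate from $1/f(0) > 0$.

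I do not expect a genuine obstacle here; the only point that requires care is ensuring $f$ stays positive before dividing, which is why I separate the trivial sign case first. Everything else is a one-line computation followed by a monotone integration.
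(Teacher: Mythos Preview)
Your argument is correct and is essentially the explicit version of what the paper sketches: the paper simply says the bound is a direct consequence of Gronwall's inequality together with the observation that $t\mapsto 1/(t+a)$ solves $f'=-f^2$. Your substitution $g=1/f$ is precisely the standard way to carry out that comparison, and you additionally handle the positivity issue that the paper leaves implicit.
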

\begin{proof}
The result is a direct consequence of Gronwall's inequality and the fact that the function $t \to \frac{1}{t+a}$ solves the equation $f'(t) = - f(t)^2$.
\end{proof}

Equation \eqref{eq:dAt} and Jensen's inequality give that for all $\theta \in \Sph$, 
\begin{align*}
\frac{d}{dt} \EE[\langle \theta, Q A_t Q \theta \rangle ] ~& \leq - \langle \theta, \EE [QA_tQ]^2 \theta \rangle \leq - \EE [\langle \theta, QA_tQ \theta \rangle]^2.
\end{align*}

In conjunction with the above lemma, we learn that
\begin{equation}\label{eq:eigdecay}
\EE[\langle \theta, QA_tQ \theta \rangle] \leq \frac{1}{t + \frac{1}{\langle \theta, QA_0Q \theta \rangle}}, ~~ \forall \theta \in \Sph.
\end{equation}
Since $A_0$ is positive definite and since $\tau > 1$ almost surely and $\tau$ is independent of the process $A_t$, equation \eqref{eq:est2-b} follows. 

It remains to prove \eqref{eq:est-ent-b}. To that end, Equation \eqref{eq:dHt} with the fact that $\tau$ is independent from the process $A_t$ give
$$
H(\mu) - \EE[H(\mu_\tau)] = \frac{1}{2} \EE_\tau \left [ \int_0^\tau \EE[\Tr(QA_sQ)] ds \right ] \leq \frac{1}{2} \int_0^2 \EE[\Tr(QA_sQ)] ds.
$$
Let $u_1,..,u_n$ be a basis composed of unit eigenvectors of $QA_0Q$. The formula $\int_0^2 \frac{1}{t + \frac{1}{a}} d t = \log(2 a + 1)$ gives,
\begin{align*}
\int_0^2 \EE[\Tr(QA_tQ)] dt  ~& = \sum_{i \in [n]} \int_0^2 \EE[\langle u_i, QA_tQ u_i \rangle] dt   \\
& \stackrel{\eqref{eq:eigdecay}}{\leq} \sum_{i \in [n]} \int_0^2 \frac{1}{t + \langle u_i, QA_0Q u_i \rangle^{-1}} dt  \\
& =  \sum_{i \in [n]} \log \bigl (2 \langle u_i, Q A_0 Q u_i \rangle + 1 \bigr)  \\
& = \log \det (2 Q A_0 Q + \Id) \leq 2\log \det (Q A_0 Q + \Id).
\end{align*}
The conjunction of the last two displays gives \eqref{eq:est-ent-b} and completes the proof of Theorem \ref{thm:main}.

\section{The mean-field approximation}

In this section, we prove Theorem \ref{thm:meanfield} and Corollary \ref{cor:mf}.
\begin{proof}[Proof of Theorem \ref{thm:meanfield}]
	We invoke Theorem \ref{thm:main} with $L = \tilde J^{1/2} = (J^2)^{1/4}$, to obtain a measure $m$ on an index set $\mathcal{I}$, with 
	\begin{equation}\label{eq:Hest1}
	\EE_{\theta \sim m} \left (H(\mu) - H(\mu_\theta) \right ) \leq \log \det \left  (\tilde J^{1/2} \Cov(\mu) \tilde J^{1/2} + \Id \right )
	\end{equation}
	and
	$$
	\EE_{\theta \sim m} \tilde J^{1/2} \Cov(\mu_\theta) \tilde J^{1/2} \preceq \Id.
	$$
	Note that by the law of total variance, we have 
	$$
	\EE_{\theta \sim m} \tilde J^{1/2} \Cov(\mu_\theta) \tilde J^{1/2} \preceq \tilde J^{1/2} \Cov(\mu) \tilde J^{1/2}.
	$$ 
	Denoting the $u_1,...,u_n$ the unit eigenvectors of the matrix $\tilde J^{1/2} \Cov(\mu) \tilde J^{1/2}$ with $\lambda_1,...,\lambda_n$ being the corresponding eigenvalues, the two last inequalities give
	$$
	\EE_{\theta \sim m} \langle u_i, \tilde J^{1/2} \Cov(\mu_\theta) \tilde J^{1/2} u_i \rangle \leq \min(\lambda_i, 1) \leq 2 \log (1 + \lambda_i).
	$$
	Thus,
	\begin{equation}\label{eq:Hest2}
	\EE_{\theta \sim m} \Tr( \tilde J \Cov(\mu_\theta) ) \leq 2 \log \det (\tilde J^{1/2} \Cov(\mu) \tilde J^{1/2} + \Id).
	\end{equation}	
	
	For a measure $\rho$ on $(\RR^k)^n$, denote by $\xi(\rho)$ the unique measure such that its marginals on every $\RR^k$ are independent, and identical to the corresponding marginals of $\rho$. 
	
	Note that for a random vector $X = (X_1,...,X_{nk})$, if we define $\tilde X = (\tilde X_1, ..., \tilde X_{nk})$ such that $\tilde X_i$ are independent random variables and that $X_i$ and $\tilde X_i$ have the same distribution, then
	\begin{align*}
	\EE f(X) ~& = \sum_{i,j} J_{i,j} \EE[X_i X_j] + \sum_i h_i \EE[X_i] \\
	& = \sum_{i,j} J_{i,j} \left (\EE[X_i] \EE [X_j] + \Cov(X_i, X_j) \right ) + \sum_i h_i \EE[X_i] \\
	& = \EE f(\tilde X) + \Tr(J \Cov(X)) \leq \EE f(\tilde X) + \Tr(\tilde J \Cov(X)).
	\end{align*}
	This implies that $\int f d \rho \leq \int f d \xi(\rho) + \Tr(\tilde J \Cov(\rho))$ for all measures $\rho$. Moreover, since product distributions maximize entropy among the family of measures with prescribed marginals (when the background measure is a product measure), we have $H(\rho) \leq H(\xi(\rho))$. Combining those two facts, we have
	\begin{align*}
	\log \int \exp(f) d \nu^{\otimes n} ~& = \int f d \mu + H(\mu) \\
	& = \EE_{\theta \sim m} \left (\int f d \mu_\theta + H(\mu_\theta) - (H(\mu_\theta) - H(\mu))  \right ) \\
	& \leq \EE_{\theta \sim m} \left (\int f d \xi(\mu_\theta) + H(\xi(\mu_\theta)) \right ) + \EE_{\theta \sim m}  \Bigl  (H(\mu) - H(\mu_\theta) + \Tr(\tilde J \Cov(\mu_\theta)) \Bigr ) \\
	& \stackrel{ \eqref{eq:Hest1} \wedge \eqref{eq:Hest2} }{\leq} \EE_{\theta \sim m} \left (\int f d \xi(\mu_\theta) + H(\xi(\mu_\theta)) \right ) + 3 \log \det ( \tilde J^{1/2} \Cov(\mu) \tilde J^{1/2} + \Id).
	\end{align*}
	This shows the existence of $\theta \in \mathcal{I}$ for which the desired inequality holds true. The proof is complete.
\end{proof}

We move on to the proof of Corollary \ref{cor:mf}. The proof boils down to estimating the expression $\log \det ( \Cov(\mu) \tilde J + \Id)$, which is carried out in the two technical lemmas below.

\begin{lemma} \label{lem:annoyingbound}
Let $\alpha_i, \beta_i \geq 0$ for $i \in [n]$ with $\sum_{i \in [n]} \beta_i \leq S$ for some $S>0$. Then for all $p>0$,
\begin{equation}\label{eq:lemmax}
\sum_{i \in [n]} \log (\alpha_i \beta_i + 1) \leq \frac{3 p+1}{p} \left (S \| \alpha \|_p \right )^{p/(p+1)}.
\end{equation}
\end{lemma}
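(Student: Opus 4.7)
The plan is to combine a pointwise bound on $\log(1+x)$ with a single application of Hölder's inequality, choosing exponents that exactly convert the $\ell^p$ constraint on $\alpha$ and the $\ell^1$ constraint on $\beta$ into the right hand side.

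The first step will be the pointwise lemma: for every $r \in (0,1)$ and $x \geq 0$,
\[
\log(1+x) \leq \frac{x^r}{r}.
\]
To prove this I would set $g(x) = x^r/r - \log(1+x)$, note $g(0)=0$, and compute $g'(x) = x^{r-1} - 1/(1+x)$, which is nonnegative iff $1+x \geq x^{1-r}$; this holds for all $x \geq 0$ because $x^{1-r} \leq 1$ on $[0,1]$ and $x^{1-r} \leq x$ on $[1,\infty)$ when $1-r \in (0,1)$.

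The second step is to choose $r = p/(p+1) \in (0,1)$ and apply the pointwise bound termwise to obtain
\[
\sum_{i \in [n]} \log(\alpha_i \beta_i + 1) \;\leq\; \frac{1}{r} \sum_{i \in [n]} \alpha_i^{r} \beta_i^{r}.
\]
Then I would apply Hölder with the conjugate exponents $a = p+1$ and $b = (p+1)/p$, which satisfy $1/a+1/b=1$. The key algebraic observation is that with this choice one has $r a = p$ and $r b = 1$, so
\[
\sum_{i} \alpha_i^{r} \beta_i^{r} \;\leq\; \Bigl(\sum_i \alpha_i^{p}\Bigr)^{1/a} \Bigl(\sum_i \beta_i\Bigr)^{1/b} \;=\; \|\alpha\|_p^{\,r}\, S^{\,r} \;=\; (S\|\alpha\|_p)^{p/(p+1)},
\]
since $1/b = p/(p+1) = r$. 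Putting the two bounds together yields
\[
\sum_{i \in [n]} \log(\alpha_i \beta_i + 1) \;\leq\; \frac{p+1}{p}\,(S\|\alpha\|_p)^{p/(p+1)},
\]
which is stronger than \eqref{eq:lemmax} since $p+1 \leq 3p+1$ for $p>0$.

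The only nontrivial point is the choice of $r$; everything else is routine once the exponent is fixed. The reason $r = p/(p+1)$ is forced is that Hölder must simultaneously turn $\alpha_i^r$ into $\alpha_i^p$ (giving $\|\alpha\|_p$) and turn $\beta_i^r$ into $\beta_i^1$ (to use the constraint $\sum \beta_i \le S$), which requires $r a = p$ and $r b = 1$ with $1/a+1/b=1$, and a short computation shows this system has the unique solution $r = p/(p+1)$, $a = p+1$, $b = (p+1)/p$. No case analysis on $\alpha_i\beta_i$ being small or large is needed; the fractional power $x^r$ handles both regimes of $\log(1+x)$ in one shot.
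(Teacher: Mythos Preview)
Your proof is correct and actually yields the sharper constant $\tfrac{p+1}{p}$ in place of $\tfrac{3p+1}{p}$. The pointwise bound $\log(1+x)\le x^r/r$ is verified exactly as you say, and the Hölder step with $a=p+1$, $b=(p+1)/p$ and $r=p/(p+1)$ is clean: $ra=p$ and $rb=1$ land precisely on the two available norms.

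The paper takes a different, more combinatorial route. It splits $\log(1+x)\le \max(\log x,0)+\min(x,1)$ and optimizes each piece over the simplex $\{\beta:\sum\beta_i\le S,\ \beta_i\ge 0\}$ separately: for the logarithmic piece it reduces to $\beta_i=S/k$ on a support of size $k$, applies the power-mean inequality and $x\log(a/x)\le a/e$ to get $\tfrac{p+1}{ep}(S\|\alpha\|_p)^{p/(p+1)}$; for the bounded piece it argues the maximizer has $\beta_i=1/\alpha_i$ on the first $k$ indices and bounds $k$ by $(S\|\alpha\|_p)^{p/(p+1)}$. Your single pointwise inequality $\log(1+x)\le x^r/r$ handles both regimes at once and avoids the optimization arguments entirely; the paper's decomposition buys nothing here and in fact loses a constant factor. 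The only thing the paper's approach makes more visible is the ``effective number of large terms'' $k\approx (S\|\alpha\|_p)^{p/(p+1)}$, which might have conceptual value elsewhere, but for proving the inequality itself your argument is strictly simpler and sharper.
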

\begin{proof}
By reordering, assume without loss of generality that $\alpha_i$ is decreasing.  We use the inequality,
\begin{equation}\label{eq:sumbound}
\log(1+\alpha_i \beta_i) \leq \max(\log(\alpha_i \beta_i),0) + \min(\alpha_i \beta_i, 1)
\end{equation}
and bound each one of the terms separately, beginning with the first one.

Assume that the sequence $(\beta_i)_i$ maximizes the expression $\sum_{i \in [n]} \max(\log (\alpha_i \beta_i), 0)$ among all non-negative sequences whose sum is $S$; such maximum exists due to continuity and compactness. Suppose that the size of the support of the sequence $(\beta_i)$ is equal to $k$. We first claim that, without loss of generality we may also assume that the sequence is supported on the first $k$ entries and that
$$
\log (\alpha_i \beta_i) > 0, ~~ \forall i \in [k].
$$
Indeed, suppose that $\alpha_j \beta_j \leq 1$ for some $j \in [k]$. Then either $\alpha_i \beta_i \leq 1$ for all $i \in [k]$ in which case the sum is equal to zero, and otherwise if there is an index $\ell$ such that $\alpha_\ell \beta_\ell > 1$, then we can increase the expression $\sum_{i \in [k]} \max(\log(\alpha_i \beta_i),0)$ by increasing $\beta_\ell$ by $\eps$ and respectively decreasing $\beta_j$ by $\eps$, for $\eps$ small enough, contradicting the maximality of $(\beta_i)_i$.

It is therefore enough to bound from above the expression $\sum_{i \in [k]} \log(\alpha_i \beta_i)$. By concavity, we have that this expression is maximized for the choice $\beta_i = S/k$, for all $i \in [k]$, thus
$$
\sum_{i \in [k]} \log(\alpha_i \beta_i) \leq \max_k \left (\sum_{i \in [k]} \log \left ( \frac{\alpha_i S}{k} \right ) \right ).
$$
Now, by the generalized mean inequality, for all $p>0$,
$$
\sum_{i \in [k]} \log(\alpha_i) \leq k \log \left ( \frac{1}{k^{1/p}} \left (\sum_{i \in [k]} \alpha_i^p \right )^{1/p}  \right )
$$
and therefore
$$
\sum_{i \in [k]} \log \left ( \frac{\alpha_i S}{k} \right ) \leq k \log \left ( \frac{S \|\alpha\|_p }{k^{1+1/p}}   \right ) = \frac{p+1}{p} k \log \left ( \frac{ \left (S \|\alpha\|_p\right )^{\frac{p}{p+1}} }{k}   \right ).
$$
Using the inequality $x \log (a/x) \leq a/e$, we finally arrive at
\begin{equation}\label{eq:midbound}
\sum_{i \in [n]} \max(\log(\alpha_i \beta_i), 0) \leq \frac{p+1}{e p} \left (S \|\alpha\|_p\right )^{\frac{p}{p+1}}.
\end{equation}
We now move on to the term $\sum_{i \in [n]} \min(\alpha_i \beta_i, 1)$. Recalling that the sequence $(\alpha_i)$ is positive and decreasing, a moment of reflection reveals that under the constraint $\sum_i \beta_i = S$, this expression is maximized for the choice $\beta_i = 1/\alpha_i$ for all $i \leq k$, for some $k$, and $\beta_i = 0$ for all $i \geq k+1$, in which case $\sum_{i \in [n]} \min(\alpha_i \beta_i, 1) \leq k+1$. Now, we have
$$
\sum_{i \in [k]} \frac{1}{\alpha_i} = \sum_{i \in [k]} \beta_i \leq S
$$
and therefore, for all $p > 0$,
$$
\left (\frac{1}{k} \sum_{i \in [k]} \alpha_i^p \right )^{1/p}  \geq \left ( \frac{1}{k} \sum_{i \in [k]} \frac{1}{\alpha_i}\right )^{-1} \geq \frac{k}{S}.
$$
This implies $k \leq \left ( S \|\alpha\|_p \right )^{\frac{p}{p+1}}$, which gives 
$$
\sum_{i \in [n]} \min(\alpha_i \beta_i, 1) \leq 2 \left ( S \|\alpha\|_p \right )^{\frac{p}{p+1}}.
$$
Combining this inequality with \eqref{eq:sumbound} and \eqref{eq:midbound} finishes the proof.
\end{proof}

\begin{lemma} \label{lem:diag}
Let $A,B$ be two positive-definite matrices such that the eigenvalues of $A$ are $\alpha_1,...,\alpha_n$ and such that $\Tr(B) \leq S$. Then
$$
\log \det (AB + \Id) \leq \max \left \{ \sum_{i \in [n]} \log (\alpha_i \beta_i + 1); ~~ \sum_{i \in [n]} \beta_i = S \mbox{ and } \beta_i \geq 0, ~ \forall i \in [n]  \right \}.
$$
\end{lemma}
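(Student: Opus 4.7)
The plan is to reduce to the case where $A$ is diagonal, rewrite the determinant symmetrically, and then apply Hadamard's inequality for positive-definite matrices. Since determinants are invariant under orthogonal conjugation, pick an orthogonal $U$ with $U^{T} A U = \mathrm{diag}(\alpha_{1}, \ldots, \alpha_{n})$ and set $\tilde B = U^{T} B U$. Then $\det(AB + \Id) = \det((U^{T}AU)(U^{T}BU) + \Id)$, and $\tilde B$ is positive-definite with $\Tr(\tilde B) = \Tr(B) \leq S$. One may thus assume from the outset that $A = \mathrm{diag}(\alpha_{1}, \ldots, \alpha_{n})$; write $b_{ii}$ for the diagonal entries of $B$, which are non-negative (since $B \succ 0$) and sum to at most $S$.

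The next step is to symmetrize. Using the identity $\det(\Id + CD) = \det(\Id + DC)$ with $C = A^{1/2}$ and $D = A^{1/2}B$ yields $\det(AB + \Id) = \det(A^{1/2} B A^{1/2} + \Id)$. The matrix $M := A^{1/2} B A^{1/2} + \Id$ is positive-definite, so by Hadamard's inequality $\det M \leq \prod_{i} M_{ii}$. Since $A$ is diagonal, so is $A^{1/2}$ (with entries $\sqrt{\alpha_{i}}$), hence $M_{ii} = 1 + \alpha_{i} b_{ii}$, and therefore
$$
\log \det(AB + \Id) \leq \sum_{i=1}^{n} \log(1 + \alpha_{i} b_{ii}).
$$

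Finally, the sequence $\beta_{i} := b_{ii}$ is non-negative with $\sum_i \beta_i \leq S$. Since each term $\log(1 + \alpha_{i} \beta_{i})$ is non-decreasing in $\beta_{i}$, inflating any single entry to make $\sum_i \beta_{i} = S$ only increases the sum, so $\sum_i \log(1+\alpha_i b_{ii})$ is bounded by the maximum on the right-hand side. There is no substantial obstacle; the only non-trivial ingredient is Hadamard's inequality, which becomes available precisely because of the rewrite into the symmetric form $\det(\Id + A^{1/2} B A^{1/2})$.
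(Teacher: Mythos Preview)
Your proof is correct. The reduction to diagonal $A$, the symmetrization $\det(AB+\Id)=\det(A^{1/2}BA^{1/2}+\Id)$, and the final inflation step are all identical to the paper's argument. The one genuine difference is in how you pass from the full matrix $A^{1/2}BA^{1/2}+\Id$ to its diagonal: you invoke Hadamard's inequality directly, whereas the paper instead splits $B=B'+B''$ into diagonal and off-diagonal parts, defines $f(t)=\log\det(A^{1/2}B'A^{1/2}+\Id+tA^{1/2}B''A^{1/2})$, observes that $f$ is concave on $[0,1]$ by log-concavity of the determinant on the positive-definite cone, and shows $f'(0)=0$ via the Leibniz expansion (every non-identity permutation contributes at least two off-diagonal factors), concluding $f(1)\le f(0)$.

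The two routes are essentially equivalent in strength---the paper's concavity argument is in effect a hands-on proof of Hadamard's inequality for this particular matrix---but your version is cleaner: citing Hadamard as a black box collapses the heart of the argument to one line and avoids the slightly delicate verification that $f'(0)=0$. The paper's version has the minor advantage of being self-contained.
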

\begin{proof}
Without loss of generality, we can assume that $A$ is diagonal since $\det (AB + I) = \det ((U A U^{-1}) U B U^{-1} + I)$ and $\Tr(U B U^{-1}) = \Tr(B)$ for all orthogonal matrices $U$. Moreover, we remark that
$$
\log \det (AB + \Id) = \log \det (A^{1/2} B A^{1/2} + \Id).
$$
Now, let $B'$ and $B''$ be the diagonal and off-diagonal parts of $B$ respectively. Then,
$$
A^{1/2} B A^{1/2} + \Id = A^{1/2} B' A^{1/2} + \Id + A^{1/2} B'' A^{1/2}.
$$
Note that the matrix $A^{1/2} B' A^{1/2} + \Id$ is diagonal whereas the matrix $A^{1/2} B'' A^{1/2}$ is a symmetric matrix with zeros on its diagonal. Define,
$$
f(t) := \log \det \left ( A^{1/2} B' A^{1/2} + \Id + t A^{1/2} B'' A^{1/2} \right ).
$$
Since the determinant is a log-concave function on the positive-definite cone, and since both matrices $A^{1/2} B' A^{1/2} + \Id$ and $A^{1/2} B' A^{1/2} + \Id + A^{1/2} B'' A^{1/2}$ lie in the positive-definite cone, we conclude that the function $f(t)$ is concave on $[0,1]$. We claim that $f'(0) = 0$. Indeed, expanding the expression for the determinant, we remark that every permutation that makes a nonzero contribution is either the identity or has at least two non-fixed points. It follows that $f(t) = f(0) + O(t^2)$. In conjunction with the concavity of $f$, we learn that $f(1) \leq f(0)$. Thus, denoting the diagonal values of $B'$ by $\beta_1,...,\beta_n$, we have arrived at the inequality
$$
\log \det \left ( A B + \Id \right ) \leq \sum_{i \in [n]} \log(\alpha_i \beta_i + 1)
$$
with $\beta_i \geq 0$ for all $i \in [n]$ and $\sum_i \beta_i \leq S$. The proof is complete.
\end{proof}
	
We are now ready to prove the corollary.	

\begin{proof}[Proof of Corollary \ref{cor:mf}]
The combination of Equation \eqref{eq:main-mf} with Lemma \ref{lem:diag} teaches us that
$$
\int f d \xi + H_{\nu^{\otimes n}} (\xi) \geq \log Z - \mathcal{S}\bigl (J, \Tr(\Cov(\mu)) \bigr ).
$$
Now, since the diameter of the support of $\nu$ is $D$, we have that the diameter of the support of $\nu^{\otimes n}$ is $D \sqrt{n}$, which gives that $\Tr(\Cov(\mu)) \leq \int |x|^2 d \mu \leq D^2 n$. This proves \eqref{eq:corr1}.

An application of Lemma \ref{lem:annoyingbound} now teaches us that for all $p>0$,
$$
\mathcal{S}\bigl (J, D^2 n \bigr ) \leq 3 \frac{p+1}{p} \left (D^2 n \| \alpha \|_p \right )^{p/(p+1)},
$$
which proves equation \eqref{eq:schatten}.

Finally, to prove equation \eqref{eq:rank}, we note that if $\sum_i \beta_i \leq D^2 n$ and $(\alpha_i)_i$ are the eigenvalues of $\tilde J$, then
$$
\sum_{i \in [n]} \log(\alpha_i \beta_i + 1) \leq \mathrm{Rank}(J) \log \left (\max_{i \in [n]} \alpha_i \max_{i \in [n]} \beta_i + 1 \right ) \leq \mathrm{Rank}(J) \log \left (\|J\|_{OP} D^2 n + 1 \right ),
$$
as needed.
\end{proof}

\subsection*{Acknowledgements}
I'd like to thank Vishesh Jain, Frederic Koehler and Andrej Risteski for pointing out to me that the approximation in the example of the Curie-Weiss model is sharp, and for several other suggestions regarding the presentation of this manuscript. We are also thankful to Ofer Zeitouni and to the anonymous referee for useful comments and suggestions.

\bibliographystyle{alpha}
\bibliography{bib-decomp}

\end{document}